\DeclareMathOperator{\rank}{\mathrm{rank}}
\DeclareMathOperator{\End}{\mathrm{End}}
\newtheorem{theorem}{Theorem}
\newtheorem{lemma}[theorem]{Lemma}
\newtheorem{corollary}[theorem]{Corollary}
\newtheorem{proposition}[theorem]{Proposition}
\theoremstyle{definition}
\newtheorem{definition}[theorem]{Definition}
\newtheorem{remark}[theorem]{Remark}
\newtheorem{example}[theorem]{Example}
\newcommand{\g}{\mathfrak{g}}
\newcommand{\s}{\mathfrak{s}}
\newcommand{\C}{\mathbb{C}}
 \newcommand{\A}{\mathcal{A}}
 \newcommand{\Sp}{\operatorname{Sp}}
 \newcommand{\SL}{\operatorname{SL}}
\newcommand{\GL}{\operatorname{GL}}
\newcommand{\SO}{\operatorname{O}}
\begin{document}

\title{A complete set of intertwiners for arbitrary tensor product representations via current algebras}
\author{Shrawan Kumar}
\date{}

\maketitle

{\it Abstract:} Let $\mathfrak{g}$ be a reductive Lie algebra and let $\vec{V}(\vec{\lambda})$ be a tensor product of $k$ copies of finite dimensional irreducible $\g$-modules. Choosing $k$ points in $\C$, $\vec{V}(\vec{\lambda})$  acquires a natural structure of the current algebra $\g\otimes \C[t]$-module.  Following a work of Rao [R], we produce an explicit and complete set of $\g$-module intertwiners of $\vec{V}(\vec{\lambda})$ in terms of the action of the current algebra. 

\section{Introduction}

Let $\mathfrak{g}$ be a finite dimensional reductive Lie algebra over the complex numbers $\mathbb{C}$
and let $A$  be any commutative $\C$-algebra with identity. Then, $\mathfrak{g}\otimes A$ acquires a natural structure of Lie algebra.
Take any $\mathfrak{g}$-invariant $\theta\in \left[\mathfrak{g}^{\otimes k}\right]^{\mathfrak{g}}$, $\theta=\sum\limits_{i}x^{i}_{1}\otimes x^{i}_{2}\otimes\cdots\otimes x^{i}_{k}$,  and any $P_{1},\dots,P_{k}\in A$. 
Then, $\theta(P_{1},\dots,P_{k})\in U(\mathfrak{g}\otimes A)$ defined by
$$
\theta(P_{1},\dots,P_{k}):=\sum\limits_{i}x^{i}_{1}(P_{1})\dots x^{i}_{k}(P_{k}),$$
commutes with $\g$ (cf. Lemma \ref{lem6}), where $U(\mathfrak{g}\otimes A)$ is the enveloping algebra of $\mathfrak{g}\otimes A$.

In fact, we show that, via the above construction, we get all of $\left[U(\mathfrak{g}\otimes A)\right]^\g$ as $\theta$ ranges over  $\left[\mathfrak{g}^{\otimes k}\right]^{\mathfrak{g}}$ and $P_i$'s range over the elements of $A$ (see Proposition \ref{prop8} for a more precise result).

Let us take $A= \mathbb{C}[t]$ and denote $\g\otimes A$ by $\g[t]$. For any $\vec{p}= (p_1, \dots, p_d) \in \C^d$ and any irreducible $\g$-modules $V(\lambda_1), \dots, V(\lambda_d)$ with highest weights $\lambda_1, \dots, \lambda_d$ respectively, consider the tensor product $\g$-module 
$$\vec{V}(\vec{\lambda}) := V(\lambda_1) \otimes \dots \otimes V(\lambda_d), \,\,\,\text{where}\,\, \vec{\lambda}= (\lambda_1, \dots, \lambda_d).$$
Then, $\vec{V}(\vec{\lambda})$ acquires the structure of a $\g[t]$-module (called an {\it evaluation module}):
$$x(P)\cdot (v_1\otimes \dots \otimes v_d):= \sum_{i=1}^d\, P(p_i) v_1\otimes \dots \otimes x\cdot v_i\otimes \dots \otimes v_d,\,\,\,\text{for}\,\, x\in \g, P\in \C[t], v_i \in V(\lambda_i).$$
To emphasize the $\g[t]$-module structure, we denote $\vec{V}(\vec{\lambda})$ by $\vec{V}_{\vec{p}}(\vec{\lambda})$.

Consider the Casimir element  $\Omega \in \left[\g^{\otimes 2}\right]^\g$. Then, we calculate the action of $\Omega (P, Q)$ on 
 $\vec{V}_{\vec{p}}(\vec{\lambda})$ for any $\vec{\lambda}=(\lambda_1, \lambda_2)$ (cf. Lemma \ref{casimir}). 

 We assume now that $p_i$ are all distinct. In this case $\vec{V}_{\vec{p}}(\vec{\lambda})$ is an irreducible $\g[t]$-module.
Decompose $\vec{V}_{\vec{p}}(\vec{\lambda})$  into its isotypic components (as a $\g$-module):
$$
\vec{V}_{\vec{p}}(\vec{\lambda}) =\oplus \vec{V}_{\vec{p}}(\vec{\lambda}) [\mu],
$$
where $\vec{V}_{\vec{p}}(\vec{\lambda}) [\mu]$ denotes the  isotypic component corresponding to the highest weight $\mu$. 

Clearly, the action of $\g$ commutes with the action of $\left[U(\g[t])\right]^\g$  on $\vec{V}_{\vec{p}}(\vec{\lambda})$. Thus, we get an action of 
$\g\times \left[U(\g[t])\right]^\g$ on $\vec{V}_{\vec{p}}(\vec{\lambda})$  stabilizing each isotypic component $\vec{V}_{\vec{p}}(\vec{\lambda}) [\mu].$ 

The following theorem is one of our principal results of the paper (cf. Theorem \ref{thm9}).
\begin{theorem}
Each isotypic component $\vec{V}_{\vec{p}}(\vec{\lambda}) [\mu]$ is an irreducible module for $\mathfrak{g}\times \left[U(\g[t])\right]^\g$.
\end{theorem}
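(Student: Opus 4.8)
The plan is to deduce the irreducibility of each isotypic component $\vec{V}_{\vec{p}}(\vec{\lambda})[\mu]$ under $\g\times\left[U(\g[t])\right]^\g$ from the irreducibility of $\vec{V}_{\vec{p}}(\vec{\lambda})$ as a $\g[t]$-module, via a double-commutant argument. First I would fix a highest weight $\mu$ and write $E := \left[U(\g[t])\right]^\g$, so that $\vec{V}_{\vec{p}}(\vec{\lambda})[\mu]$ is a module over $\g\times E$. Picking a highest weight vector of weight $\mu$ inside a chosen copy of $V(\mu)$, it suffices to show that the $\mu$-highest-weight space $W_\mu := \vec{V}_{\vec{p}}(\vec{\lambda})[\mu]^{\mathfrak n^+}_\mu$ (the subspace of vectors of weight $\mu$ killed by the nilradical $\mathfrak n^+$) is irreducible as an $E$-module; indeed, $\vec{V}_{\vec{p}}(\vec{\lambda})[\mu]\cong V(\mu)\otimes W_\mu$ as $\g\times E$-modules (with $\g$ acting on the first factor and $E$ on the second), because $E$ commutes with $\g$ and hence preserves $W_\mu$, and any $\g\times E$-submodule is of the form $V(\mu)\otimes W'$ for an $E$-submodule $W'\subseteq W_\mu$.

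Next I would invoke the double-commutant philosophy. Since $\vec{V}_{\vec{p}}(\vec{\lambda})$ is an irreducible $\g[t]$-module (stated in the excerpt, using that the $p_i$ are distinct), the image of $U(\g[t])$ in $\End\bigl(\vec{V}_{\vec{p}}(\vec{\lambda})\bigr)$ is dense, i.e. equals the full endomorphism algebra by Jacobson density (the module is not assumed finite-dimensional, but one works with the finite-dimensional $\g$-isotypic pieces, which are $U(\g[t])$-stable up to finite sums, or more cleanly one restricts attention to the finitely many weight spaces involved). The key structural input I need is that the algebra generated by $\g$ (acting via the diagonal/evaluation action) together with $E=\left[U(\g[t])\right]^\g$ is all of $\End_{\C}\bigl(\vec{V}_{\vec{p}}(\vec{\lambda})\bigr)$ — equivalently, that $E$ surjects onto $\End_\g\bigl(\vec{V}_{\vec{p}}(\vec{\lambda})\bigr)$. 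Granting this, the pair $\bigl(\g, E\bigr)$ acts on $\vec{V}_{\vec{p}}(\vec{\lambda})$ exactly as a Howe-type dual pair: the image of $\g$ and the image of $E$ are mutual commutants inside a full matrix algebra, which forces a multiplicity-free-in-the-appropriate-sense decomposition and the irreducibility of each $V(\mu)\otimes W_\mu$.

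The main obstacle — and the step that carries the real content — is establishing that $E = \left[U(\g[t])\right]^\g$ maps onto $\End_\g\bigl(\vec{V}_{\vec{p}}(\vec{\lambda})\bigr)$, i.e. that the $\g$-module intertwiners of the tensor product are all realized by current-algebra elements of the form $\theta(P_1,\dots,P_k)$. This is precisely where the construction advertised in the abstract and the earlier results (Proposition \ref{prop8}, the analysis of $\Omega(P,Q)$ in Lemma \ref{casimir}) must be brought to bear: Proposition \ref{prop8} identifies $\left[U(\g[t])\right]^\g$ concretely in terms of the $\theta(\vec P)$, and one must check that, upon evaluation at distinct points $p_1,\dots,p_k$, these elements already span $\End_\g\bigl(\vec{V}_{\vec{p}}(\vec{\lambda})\bigr)$. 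I would handle this by an inductive/separation-of-points argument: using that the $p_i$ are distinct, suitable polynomials $P_i$ can isolate each tensor factor, so that the evaluation images of $\theta(\vec P)$ generate the same algebra as $\bigl[U(\g)^{\otimes k}\bigr]^\g$ acting factor-wise — and the latter is classically known (by Schur–Weyl-type reasoning, or by the surjectivity of $U(\g)^{\otimes k}$ onto $\End(V(\lambda_1))\otimes\dots\otimes\End(V(\lambda_k))$ combined with taking $\g$-invariants) to be all of $\End_\g\bigl(\vec{V}(\vec{\lambda})\bigr)$. Once that surjectivity is in hand, the double-commutant conclusion of the previous paragraph finishes the proof, and one reads off that $W_\mu$ is $E$-irreducible, hence $\vec{V}_{\vec{p}}(\vec{\lambda})[\mu]$ is irreducible over $\g\times E$.
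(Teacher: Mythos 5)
Your overall plan---use Jacobson density/Burnside to get $\varphi\colon U(\g[t])\twoheadrightarrow\End_\C(W)$, deduce that $E=[U(\g[t])]^\g$ surjects onto $\End_\g(W)$, and then read off irreducibility of each $V(\mu)\otimes W_\mu$ from the double-centralizer picture---is precisely the structure of the paper's proof, with your highest-weight space $W_\mu$ playing the role of the paper's $\mathfrak{b}$-eigenspace $W[\mu]^+$. The one place you diverge is the surjectivity of $E\twoheadrightarrow\End_\g(W)$, which you single out as ``the main obstacle'' and ``the step that carries the real content'' and propose to prove by a separation-of-points argument using Proposition~\ref{prop8}, Lagrange interpolation, and Schur--Weyl-type reasoning. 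That makes this step look far harder than it is. The paper gets it in one line from the surjectivity of $\varphi$: the map $\varphi$ is $\g$-equivariant for the adjoint action on both sides, $\g$ is reductive and acts locally finitely on $U(\g[t])$ and on the finite-dimensional $\End_\C(W)$, so the $\g$-invariants functor is exact; hence $[U(\g[t])]^\g\twoheadrightarrow[\End_\C(W)]^\g=\End_\g(W)$ immediately. Neither Proposition~\ref{prop8} nor Lemma~\ref{casimir} enters the proof of this theorem; they serve elsewhere to make the intertwiners explicit. Your separation-of-points route is probably repairable (in effect it amounts to applying the same exactness-of-invariants fact to the surjective evaluation map $U(\g[t])\twoheadrightarrow U(\g)^{\otimes k}$ instead of to $\varphi$), but as written it leaves a gap---you assert rather than show that the evaluated $\theta(\vec{P})$ span $\bigl[U(\g)^{\otimes k}\bigr]^\g$---and it is in any case an unnecessary detour. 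Replace it with the direct invariants argument and the proof matches the paper's.
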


In Sections 3, 5 and 6, we determine the spaces $\left[\mathfrak{g}^{\otimes k}\right]^{\mathfrak{g}}$ for $\g = gl(n), {\s}p(2n)$ and ${\s}o(n)$ respectively using the First Fundamental Theorem of Invariant Theory. (In the last case, for $n$ even, we only determine $\left[\mathfrak{g}^{\otimes k}\right]^{\SO(n)}$.)

Let $V =\C^n$ be the standard representation of $\g=gl(n)$. For any positive integer $k$, the symmetric group $\Sigma_k$ acts on the tensor product $V^{\otimes k}$ by permuting the factors. Clearly, this action of $\Sigma_k$ commutes with the tensor product action of $gl(n)$
on $V^{\otimes k}$. 
Thus, we have an algebra homomorphism:
$$\Phi: \C[\Sigma_k] \to \End_\g(V^{\otimes k}),$$
where $\End_\g(V^{\otimes k})$ denotes the space of $\g$-module endomorphisms of $V^{\otimes k}$.
By the Schur-Weyl duality,  the above map $\Phi$ is an (algebra) isomorphism.
Choose  $\vec{p}=(p_1, \dots , p_k) \in \C^k$ such that $p_i$'s are distinct.  There is a surjective algebra homomorphism (for $\g=gl(n)$)
$$\varphi^{o}: \left[U(\mathfrak{g}[t])\right]^{\mathfrak{g}}\twoheadrightarrow \End_{\mathfrak{g}}(V^{\otimes k}) \simeq  \C[\Sigma_k] ,$$
where the last identification is via $\Phi$ (cf.  proof of Theorem \ref{thm9}). Thus, we get a surjective algebra homomorphism
$$\Xi: \left[U(\mathfrak{g}[t])\right]^{\mathfrak{g}}\twoheadrightarrow  \C[\Sigma_k] .$$ We give an explicit preimage of any reflection  $\tau = (r, s) \in \Sigma_k$ under $\Xi$ (cf. Proposition \ref{schur}).

\vskip2ex
\noindent
{\bf Acknowledgements.} I thank S. E. Rao for sharing his work [R]. His work and questions therein led to this work. 
 This work was supported partially by the NSF grant DMS-1501094.

\section{Intertwining operators - Main results}
Let $\mathfrak{g}$ be a finite dimensional reductive Lie algebra over the complex numbers $\mathbb{C}$
and let $A$  be any commutative $\C$-algebra with identity. Then, $\mathfrak{g}\otimes A$ acquires a natural structure of Lie algebra:
$$[x(P), y(Q)]:= [x,y](PQ), \,\,\,\text{for}\,\, x,y\in \mathfrak{g}, \, P,Q\in A,$$
where $x(P)$ denotes $x\otimes P$.

\begin{definition}\label{defi5}
Take any $\mathfrak{g}$-invariant (under the adjoint action) $\theta\in \left[\mathfrak{g}^{\otimes k}\right]^{\mathfrak{g}}$, $\theta=\sum\limits_{i}x^{i}_{1}\otimes x^{i}_{2}\otimes\cdots\otimes x^{i}_{k}$,  and any $P_{1},\dots,P_{k}\in A$. 
Define $\theta(P_{1},\dots,P_{k})\in U(\mathfrak{g}\otimes A)$ by
\begin{equation}
\theta(P_{1},\dots,P_{k}):=\sum\limits_{i}x^{i}_{1}(P_{1})\dots x^{i}_{k}(P_{k}),\label{eq12}
\end{equation}
where $U(\mathfrak{g}\otimes A)$ denotes the enveloping algebra of $\mathfrak{g}\otimes A.$
\end{definition}

\begin{lemma}\label{lem6}
$[\mathfrak{g},\theta(P_{1},\dots,P_{k})]=0$.
\end{lemma}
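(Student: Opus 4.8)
The plan is to reduce the assertion to the defining invariance of $\theta$ by using that, inside the associative algebra $U(\mathfrak{g}\otimes A)$, the adjoint action of an element $y\otimes 1$ (for $y\in\mathfrak{g}$, regarded as $y\otimes 1\in\mathfrak{g}\otimes A$) is a derivation. To organize the computation, I would introduce the linear map $\Psi\colon \mathfrak{g}^{\otimes k}\to U(\mathfrak{g}\otimes A)$ determined by $z_1\otimes\cdots\otimes z_k\mapsto z_1(P_1)\cdots z_k(P_k)$; this is well defined because the right-hand side is multilinear in $(z_1,\dots,z_k)$, and by \eqref{eq12} one has $\theta(P_1,\dots,P_k)=\Psi(\theta)$.

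Fix $y\in\mathfrak{g}$. Since $[a,bc]=[a,b]c+b[a,c]$ in any associative algebra, $[y\otimes 1,-]$ is a derivation of $U(\mathfrak{g}\otimes A)$; applying it to a monomial $z_1(P_1)\cdots z_k(P_k)$ and using $[y\otimes 1,z_j(P_j)]=[y,z_j](1\cdot P_j)=[y,z_j](P_j)$ gives
$$[y\otimes 1,\ z_1(P_1)\cdots z_k(P_k)]=\sum_{j=1}^k z_1(P_1)\cdots [y,z_j](P_j)\cdots z_k(P_k).$$
Summing this identity over the expansion $\theta=\sum_i x^{i}_{1}\otimes\cdots\otimes x^{i}_{k}$ and rearranging the double sum, I obtain
$$[y\otimes 1,\ \theta(P_1,\dots,P_k)]=\sum_{j=1}^k \Psi\!\left(\sum_i x^{i}_{1}\otimes\cdots\otimes[y,x^{i}_{j}]\otimes\cdots\otimes x^{i}_{k}\right)=\Psi\!\left(\sum_{j=1}^k (\ad y)_j\,\theta\right),$$
where $(\ad y)_j$ denotes the operator on $\mathfrak{g}^{\otimes k}$ applying $\ad y$ to the $j$-th tensor factor.

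To finish, I would note that $\sum_{j=1}^k (\ad y)_j$ is exactly the action of $y$ on $\mathfrak{g}^{\otimes k}$ under the diagonal adjoint action; hence $\sum_{j=1}^k (\ad y)_j\,\theta=0$ because $\theta\in [\mathfrak{g}^{\otimes k}]^{\mathfrak{g}}$, and therefore $[y\otimes 1,\theta(P_1,\dots,P_k)]=\Psi(0)=0$. As $y$ was arbitrary, this gives $[\mathfrak{g},\theta(P_1,\dots,P_k)]=0$. There is no real obstacle here; the only point worth emphasizing is that one must carry out the sum over the slot index $j$ before invoking invariance, since the individual terms $(\ad y)_j\,\theta$ generally do not vanish — the derivation identity is precisely what repackages them into the diagonal action that annihilates $\theta$.
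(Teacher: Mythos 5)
Your proof is correct and follows essentially the same line as the paper's: both rest on the Leibniz rule turning the bracket with $y$ into the sum of $\ad y$ applied slot-by-slot, which is precisely the diagonal adjoint action on $\mathfrak{g}^{\otimes k}$ and therefore kills $\theta$ by invariance. The only cosmetic difference is that you work directly in $U(\mathfrak{g}\otimes A)$ via the multilinear map $\Psi$ and the derivation $[y\otimes 1,-]$, whereas the paper first verifies the vanishing for the lift $\hat\theta(P_1,\dots,P_k)$ in the tensor algebra $T(\mathfrak{g}\otimes A)$ and then pushes it down through the canonical surjection $\pi\colon T(\mathfrak{g}\otimes A)\twoheadrightarrow U(\mathfrak{g}\otimes A)$.
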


\begin{proof}
Let $\pi : T(\mathfrak{g}\otimes A)\twoheadrightarrow U(\mathfrak{g}\otimes A)$ be the canonical surjective homomorphism, where $T$ is the tensor algebra. Consider the element
$\hat{\theta} (P_{1},\dots,P_{k})\in T(\mathfrak{g}\otimes A)$ defined by
$$\hat{\theta} (P_{1},\dots,P_{k})=\sum_{i}x^{i}_{1}(P_{1})\otimes\dots\otimes x^{i}_{k}(P_{k})\in T(\mathfrak{g}\otimes A).$$
For any $y\in \mathfrak{g}$,
\begin{align}
[y,\hat{\theta}(P_{1},\dots,P_{k})] &= \sum\limits_{i}\sum\limits^{k}_{j=1}x^{i}_{1}(P_{1})\otimes\cdots\otimes [y,x^{i}_{j}](P_{j})\otimes\cdots\otimes x^{i}_{k}(P_{k})\notag\\[4pt]
&= 0,\text{~~ since~~ } [\mathfrak{g},\theta]=0,\label{eq15}
\end{align}
and $\mathfrak{g}(P_{1})\otimes\cdots\otimes \mathfrak{g}(P_{k})\simeq \mathfrak{g}\otimes \cdots\otimes \mathfrak{g}$ as $\mathfrak{g}$-modules under the adjoint action.

Now, since $\pi(\hat{\theta}(P_{1},\dots,P_{k}))=\theta(P_{1},\dots,P_{k})$, by the identity \eqref{eq15}, the lemma follows.
\end{proof}
\begin{definition}\label{defi6} (Evaluation modules) Let us take $A= \mathbb{C}[t]$ and denote it by $\A$. In this case, we denote $\g\otimes \A$ by $\g[t]$. For any $\vec{p}= (p_1, \dots, p_d) \in \C^d$ and any irreducible $\g$-modules $V(\lambda_1), \dots, V(\lambda_d)$ with highest weights $\lambda_1, \dots, \lambda_d$ respectively, consider the tensor product $\g$-module 
$$\vec{V}(\vec{\lambda}) := V(\lambda_1) \otimes \dots \otimes V(\lambda_d), \,\,\,\text{where}\,\, \vec{\lambda}= (\lambda_1, \dots, \lambda_d).$$
Then, $\vec{V}(\vec{\lambda})$ acquires the structure of a $\g[t]$-module:
$$x(P)\cdot (v_1\otimes \dots \otimes v_d):= \sum_{i=1}^d\, P(p_i) v_1\otimes \dots \otimes x\cdot v_i\otimes \dots \otimes v_d,\,\,\,\text{for}\,\, x\in \g, P\in \A, v_i \in V(\lambda_i).$$
To emphasize the $\g[t]$-module structure, we denote $\vec{V}(\vec{\lambda})$ by $\vec{V}_{\vec{p}}(\vec{\lambda})$ and it is called an {\it evaluation module}. It is well known (and easy to prove) that when $p_1, \dots, p_d$ are distinct, then $\vec{V}_{\vec{p}}(\vec{\lambda})$ is an irreducible 
$\g[t]$-module. 
\end{definition}

\begin{example}\label{exam7}
  Take a basis $\{e_{i}\}$  and the dual basis $\{e^{i}\}$ of $\mathfrak{g}$ under an invariant  non-degenerate symmetric form $\langle\, , \,\rangle$ on
$\g$. Recall the Casimir element 
$$
\Omega := \sum\limits_{i} e_{i}\otimes e^{i}\in \left[\mathfrak{g}^{\otimes 2}\right]^{\mathfrak{g}}.
$$
Then, by Lemma \ref{lem6}, for any $P,Q\in A$, $\Omega (P,Q)\in [\g\otimes A]^\g$.

For any $\vec{p}=(p_1, p_2) \in \C^2$, consider the evaluation module $\vec{V}_{\vec{p}}(\vec{\lambda})$, where $\vec{\lambda}=(\lambda_1, \lambda_2)$ is a pair of dominant integral weights. Let
\begin{align*}
P(p_{1}) &= w_1, P(p_{2})=w_2\\[4pt]
Q(p_{1}) &= z_{1}, Q(p_{2})=z_{2}.
\end{align*}
\end{example}

\begin{lemma} \label{casimir} For any $v$ in the $\g$-isotypic component of $V(\lambda_1) \otimes V(\lambda_2)$ of highest weight $\mu$,
\begin{align} \Omega (P,Q)& (v)=\notag\\
& \left(w_1z_1C_{\lambda_1} + w_2z_2C_{\lambda_2} +\frac{w_1z_2+w_2z_1}{2}(C_\mu-C_{\lambda_1}-C_{\lambda_2})\right) v,
\end{align}
where $C_\mu$ denotes the scalar by which $\Omega$ acts on $V(\mu)$.
\end{lemma}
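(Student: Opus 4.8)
The plan is to reduce the assertion to the classical computation of the Casimir operator acting on a tensor product of irreducibles.

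First I would unwind the definitions. Write $\Omega(P,Q)=\sum_i e_i(P)\,e^i(Q)\in U(\g[t])$, and recall that on a decomposable vector one has $x(R)\cdot(v_1\otimes v_2)=R(p_1)\,(xv_1)\otimes v_2+R(p_2)\,v_1\otimes(xv_2)$. Applying $e^i(Q)$ and then $e_i(P)$ to $v_1\otimes v_2$, summing over $i$, and writing $w_1=P(p_1),\,w_2=P(p_2),\,z_1=Q(p_1),\,z_2=Q(p_2)$ as in Example~\ref{exam7}, I obtain four groups of terms:
$$\Omega(P,Q)(v_1\otimes v_2)=w_1z_1\Big(\sum_i e_ie^i v_1\Big)\otimes v_2+w_2z_2\,v_1\otimes\Big(\sum_i e_ie^i v_2\Big)+w_1z_2\sum_i(e_iv_1)\otimes(e^iv_2)+w_2z_1\sum_i(e^iv_1)\otimes(e_iv_2).$$

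The first two groups are immediate: $\sum_i e_ie^i$ is the Casimir operator, the image of $\Omega$ under the multiplication $U(\g)\otimes U(\g)\to U(\g)$, and it acts on the irreducible module $V(\nu)$ by the scalar $C_\nu$; this gives the contributions $w_1z_1C_{\lambda_1}v$ and $w_2z_2C_{\lambda_2}v$. For the two cross terms I would use the symmetry of $\langle\,,\,\rangle$, which forces $\sum_i e^i\otimes e_i=\sum_i e_i\otimes e^i$ in $\g\otimes\g$; hence both cross sums equal $\omega(v_1\otimes v_2)$, where $\omega$ is the operator on $V(\lambda_1)\otimes V(\lambda_2)$ defined by $\omega(v_1\otimes v_2)=\sum_i(e_iv_1)\otimes(e^iv_2)$. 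The two cross terms therefore combine into $(w_1z_2+w_2z_1)\,\omega(v)$.

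It remains to compute $\omega$ on the isotypic component of highest weight $\mu$. Applying the comultiplication (the diagonal embedding $\g\hookrightarrow\g\oplus\g$) to the Casimir element, and again invoking the symmetry of the form, yields the classical identity $\Delta\big(\sum_i e_ie^i\big)=\big(\sum_i e_ie^i\big)\otimes 1+1\otimes\big(\sum_i e_ie^i\big)+2\,\omega$. Since $\Delta$ is an algebra homomorphism and $\sum_i e_ie^i$ is central, the left-hand side acts on the $\mu$-isotypic component of $V(\lambda_1)\otimes V(\lambda_2)$ (a direct sum of copies of $V(\mu)$ under the diagonal action) by the scalar $C_\mu$, while $\big(\sum_i e_ie^i\big)\otimes 1$ and $1\otimes\big(\sum_i e_ie^i\big)$ act by $C_{\lambda_1}$ and $C_{\lambda_2}$; hence $\omega$ acts there by $\tfrac12(C_\mu-C_{\lambda_1}-C_{\lambda_2})$. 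Substituting this into the displayed expansion gives exactly the asserted formula. No step is a real obstacle: the only points needing care are the bookkeeping of the four terms and the dual-basis symmetry $\sum_i e^i\otimes e_i=\sum_i e_i\otimes e^i$.
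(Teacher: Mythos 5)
Your proof is correct and follows essentially the same route as the paper: expand $\Omega(P,Q)$ on $v_1\otimes v_2$ into four terms, use the dual-basis symmetry $\sum_i e^i\otimes e_i=\sum_i e_i\otimes e^i$ to merge the cross terms, and evaluate the cross operator $\omega$ on the $\mu$-isotypic component via the coproduct identity $\Delta(\Omega)=\Omega\otimes1+1\otimes\Omega+2\omega$. The only cosmetic difference is that the paper derives this coproduct identity internally by specializing $P=Q=1$ in the general expansion, whereas you invoke it as a known classical fact.
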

\begin{proof} 
For $v_{1}\in V (\lambda_1)$ and $v_{2}\in V (\lambda_2)$,
\begin{align}
\left(\sum\limits_{i} e_{i}(P)\cdot e^{i}(Q)\right)&\cdot (v_1 \otimes v_2)= 
w_1z_1C_{\lambda_1} (v_1\otimes v_2)+ w_2z_2C_{\lambda_2} (v_1\otimes v_2)\notag\\ &+  w_1z_2\sum_i\, e_i\cdot v_1\otimes e^i\cdot v_2+w_2z_1 \sum_i\, e^i\cdot v_1\otimes e_i\cdot v_2.\label{eq101}
\end{align}
Taking $w_1=w_2=z_1=z_2=1$ in the above, we get
\begin{equation} \label{eq102} \Omega(v_1 \otimes v_2)=C_{\lambda_1} (v_1\otimes v_2)+ C_{\lambda_2} (v_1\otimes v_2)+  \sum_i\, e_i\cdot v_1\otimes e^i\cdot v_2+ \sum_i\, e^i\cdot v_1\otimes e_i\cdot v_2.
\end{equation}
Further, it is easy to see that 
\begin{equation} \label{eq103} \sum_i\, e^i\cdot v_1\otimes e_i\cdot v_2 = \sum_i\, e_i\cdot v_1\otimes e^i\cdot v_2.
\end{equation}
Thus, combining the equations \eqref{eq101} - \eqref{eq103}, we get 
$$\left(\sum\limits_{i} e_{i}(P)\cdot e^{i}(Q)\right)\cdot v=\left(w_1z_1C_{\lambda_1} + w_2z_2C_{\lambda_2} +\frac{w_1z_2+w_2z_1}{2}(C_\mu-C_{\lambda_1}-C_{\lambda_2})\right) v,$$
for $v$ in the $\g$-isotypic component of $V(\lambda_1) \otimes V(\lambda_2)$ of highest weight $\mu$. This proves the lemma.
\end{proof}

\begin{example} 
Take any simple Lie algebra $\mathfrak{g}$ and any
$$
\theta \in \left[\wedge^{k}(\mathfrak{g})\right]^{\mathfrak{g}}\subset \left[\otimes^{k}\mathfrak{g}\right]^{\mathfrak{g}},
$$
where we think of $\wedge^{k}(\mathfrak{g})$ as the subspace of $\otimes^{k}\mathfrak{g}$ consisting of alternating tensors. Recall that
$$
\left[\wedge^{\bullet}(\mathfrak{g})\right]^{\mathfrak{g}}\simeq \wedge (\theta_{1},\dots,\theta_{\ell}),
$$
where $\ell:= \rank\mathfrak{g}$ and $\theta_{j}\in \left[\wedge^{2b_{j}+1}(\mathfrak{g})\right]^\g$ are primitive generators. Here  $1=b_{1} \leq \dots \leq b_{\ell}$ are the exponents of $\mathfrak{g}$.
This gives rise to $\theta(P_{1},\dots,P_{k})\in \left[U(\mathfrak{g}\otimes A\right]^{\mathfrak{g}}$, for any $P_1, \dots P_k\in A.$

Similarly, 
$$\left[S^{k}(\mathfrak{g})\right]^\g \subset \left[\otimes^{k}\mathfrak{g}\right]^\g, $$ 
where we think of $S^{k}(\mathfrak{g})$ as the subspace of $\otimes^{k}\mathfrak{g}$ consisting of symmetric tensors. Recall that 
$\left[S^{\bullet}(\mathfrak{g})\right]^\g$ is a polynomial algebra generated by certain homogeneous elements $\{\delta_j\}_{1\leq j \leq \ell}$, where
$\delta_j\in \left[S^{b_j+1}(\mathfrak{g})\right]^\g$.

\end{example}

\begin{proposition}\label{prop8}
For any finite dimensional reductive Lie algebra $\mathfrak{g}$, the subalgebra $\left[U(\mathfrak{g}[t])\right]^{\mathfrak{g}}$ is 
spanned (over $\C$) by $\{\theta(n_{1},\dots,n_{k})\}$, where $\theta$ runs over a homogeneous basis of $\left[T(\g)\right]^\g$ and, for $k=$ deg $\theta$, $n_1 \leq \dots \leq n_k$ runs over non-negative integers. 

Further, the subalgebra $\left[U(\mathfrak{g}[t])\right]^{\mathfrak{g}}$ is generated (as an algebra) by $\{\theta(n_{1},\dots,n_{k})\}$, where $\theta$ runs over a set of homogeneous algebra generators of $\left[T(\mathfrak{g})\right]^{\mathfrak{g}}$ and, for $k=\text{\rm deg~}\theta$, $n_{1}\leq\dots \leq n_{k}$ runs over non-negative integers.

(Here $\theta(n_{1},\dots,n_{k})$ denotes $\theta(t^{n_{1}},\dots, t^{n_{k}}).$)
\end{proposition}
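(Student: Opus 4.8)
The plan is to exploit the standard PBW filtration on $U(\g[t])$ together with the fact that taking $\g$-invariants is exact, since $\g$ is reductive and we are dealing with locally finite $\g$-modules. First I would set up the grading: give $\g[t]$ the grading in which $x(t^n)$ has degree $n$, so that $U(\g[t])$ becomes a graded algebra and $\left[U(\g[t])\right]^\g$ a graded subalgebra (the $\g$-action preserves degrees). It therefore suffices to span each graded piece. Combined with the PBW theorem, $U(\g[t])$ has, as a $\g$-module, an increasing filtration whose associated graded is $S(\g[t]) = S(\g\otimes \A)$, and this isomorphism is $\g$-equivariant and degree-preserving. So the first reduction is: it is enough to show that the associated graded of $\left[U(\g[t])\right]^\g$, which is $\left[S(\g[t])\right]^\g$ (using exactness of invariants in each degree, which holds because each graded piece is a finite-dimensional $\g$-module), is spanned by the symbols of the elements $\theta(n_1,\dots,n_k)$.

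Next I would analyze $\left[S(\g\otimes\A)\right]^\g$ directly. Writing $\A = \C[t] = \bigoplus_{n\geq 0}\C t^n$, we have $\g\otimes\A = \bigoplus_{n\geq 0}\g_{(n)}$ where $\g_{(n)} := \g\otimes t^n \cong \g$ as a $\g$-module. Hence $S(\g\otimes\A) = \bigotimes_{n\geq 0} S(\g_{(n)})$, and a monomial basis is indexed by choosing, for each $n$, a symmetric power; taking $\g$-invariants of this tensor product and using that invariants of a tensor product are computed inside the full tensor algebra, one sees that $\left[S(\g\otimes\A)\right]^\g$ is spanned by products of elements lying in $\left[S^{a_0}(\g_{(0)})\otimes S^{a_1}(\g_{(1)})\otimes\cdots\right]^\g$. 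Each such space is a quotient (via symmetrization) of $\left[\g^{\otimes k}\right]^\g$ with $k = \sum a_n$, where the first $a_0$ tensor slots carry $t^0$, the next $a_1$ carry $t^1$, and so on; thus it is spanned precisely by the symbols of the $\theta(n_1,\dots,n_k)$ with $n_1\le\cdots\le n_k$ the corresponding weakly increasing sequence and $\theta$ ranging over a homogeneous basis of $\left[\g^{\otimes k}\right]^\g = \left[T(\g)\right]^\g$ in degree $k$. This gives the spanning statement at the associated-graded level, and lifting through the PBW filtration by downward induction on degree (the symbol of $\theta(n_1,\dots,n_k)$ in $S(\g[t])$ is exactly the image of $\theta$ under symmetrization, which is nonzero modulo lower filtration terms) yields the first assertion.

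For the second assertion, I would note that $\left[T(\g)\right]^\g$ is a finitely generated algebra (it is the algebra of $\GL$-equivariant... more simply, a Noetherian-type fact, or: it is spanned by a homogeneous basis which one may refine to algebra generators), and the map $\theta\mapsto$ symbol of $\theta(n_1,\dots,n_k)$ is, up to the PBW symbol identification, compatible with the algebra structure in the associated graded. More precisely, if $\theta = \theta'\cdot\theta''$ in $\left[T(\g)\right]^\g$, then in $\gr U(\g[t]) = S(\g[t])$ the symbol of $\theta(n_1,\dots,n_k)$ equals (after sorting the exponent multiset) the product of the symbols of $\theta'(\cdots)$ and $\theta''(\cdots)$ for the appropriate sub-sequences. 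Hence the symbols coming from algebra generators of $\left[T(\g)\right]^\g$, with all weakly increasing exponent sequences, generate $\left[S(\g[t])\right]^\g$ as an algebra; lifting again through the PBW filtration (a standard argument: a filtered algebra is generated by lifts of algebra generators of its associated graded) gives that $\left[U(\g[t])\right]^\g$ is generated as an algebra by the $\theta(n_1,\dots,n_k)$ for $\theta$ ranging over algebra generators.

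The main obstacle I anticipate is the bookkeeping in the associated-graded analysis: one must check carefully that $\left[S(\g\otimes\A)\right]^\g$ is genuinely spanned by products of the "pure-multidegree" invariants and that each of these is the symmetrization image of some $\theta\in\left[\g^{\otimes k}\right]^\g$ with the prescribed placement of powers of $t$ — i.e., that the natural surjection $\left[\g^{\otimes k}\right]^\g \twoheadrightarrow \left[S^{a_0}(\g)\otimes S^{a_1}(\g)\otimes\cdots\right]^\g$ really is surjective, which follows from averaging over the product of symmetric groups $\Sigma_{a_0}\times\Sigma_{a_1}\times\cdots$ acting on the tensor slots (this commutes with $\g$). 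The compatibility of the PBW symbol map with the $\theta(n_1,\dots,n_k)$ construction — namely that $\theta(n_1,\dots,n_k)$ has, as its leading term in $\gr U(\g[t])=S(\g[t])$, precisely the symmetrization of $\sum_i x^i_1(t^{n_1})\cdots x^i_k(t^{n_k})$ — is the other point requiring care, but it is immediate from the definition of the PBW filtration.
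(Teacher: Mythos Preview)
Your approach via the PBW filtration and the associated graded $S(\g[t])$ is correct, but the paper takes a more direct route that avoids the filtration altogether. The paper simply uses the surjection $\pi: T(\g[t]) \twoheadrightarrow U(\g[t])$ from the \emph{tensor} algebra. Since $\g[t] = \bigoplus_{n\ge 0} \g(n)$ with each $\g(n)\cong \g$ as a $\g$-module, one has immediately
\[
T(\g[t]) \;=\; \bigoplus_{k\ge 0}\;\bigoplus_{n_1,\dots,n_k\ge 0} \g(n_1)\otimes\cdots\otimes\g(n_k)\;\cong\;\bigoplus_{k,\,(n_i)} \g^{\otimes k}
\]
as $\g$-modules, so $[T(\g[t])]^\g = \bigoplus_{k,\,(n_i)} [\g^{\otimes k}]^\g$, one summand for each tuple $(n_1,\dots,n_k)$. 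Reductivity makes $\pi$ surjective on invariants, giving the spanning and generating statements with unrestricted $n_i$; the sorting $n_1\le\cdots\le n_k$ is then obtained from the commutation relations in $U(\g[t])$ (swapping adjacent factors produces a $(\sigma\cdot\theta)$-term with the two $n$'s transposed, plus a term of lower tensor degree).

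What your approach buys is that the ordering $n_1\le\cdots\le n_k$ falls out for free from the symmetric-algebra description, whereas the paper has to argue it separately via commutators. On the other hand, the paper's argument avoids the extra layer of filtration/lifting and the analysis of $[S^{a_0}(\g)\otimes S^{a_1}(\g)\otimes\cdots]^\g$ as a quotient of $[\g^{\otimes k}]^\g$, since in $T(\g[t])$ the relevant pieces are \emph{literally} copies of $[\g^{\otimes k}]^\g$ rather than symmetrized quotients. One small imprecision in your write-up: the PBW-filtered pieces $U_{\le k}(\g[t])$ are not finite-dimensional, only locally finite (equivalently, semisimple) as $\g$-modules---or finite-dimensional once you also fix the loop degree---but this is still enough for exactness of taking $\g$-invariants, so your argument goes through.
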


\begin{proof} As earlier, consider the surjective algebra homomorphism:
\begin{equation}
\pi: T(\mathfrak{g}[t])\twoheadrightarrow U(\mathfrak{g}[t]).\label{eq13}
\end{equation}
Now, $\mathfrak{g}[t]=\bigoplus^{\infty}_{n=0}\,\mathfrak{g}(n)$, where $\g(n):= \g\otimes t^n$.  Hence,
\begin{equation}
T(\mathfrak{g}[t])=\bigoplus_{k\geq 0} \bigoplus\limits_{n_{1},\dots,n_{k}\in Z_{+}} \mathfrak{g}(n_{1})\otimes\cdots\otimes \mathfrak{g}(n_{k})\simeq \bigoplus_{k\geq 0} \bigoplus\limits_{n_{1},\dots,n_{k}\in Z_{+}} \mathfrak{g}^{\otimes k}[n_{1},\dots,n_{k}],\text{~~ as $\mathfrak{g}$-modules,}\label{eq14}
\end{equation}
where $\mathfrak{g}^{\otimes k}[n_{1},\dots,n_{k}]$ simply means $\g^{\otimes k}$ as a $\g$-module, but as a subset of $T(\mathfrak{g}[t])$, it is defined to be $\g(n_1) \otimes \dots \otimes \g(n_k)$.

By \eqref{eq13}, we get
$$
\left[T(\mathfrak{g}[t])\right]^{\mathfrak{g}}\twoheadrightarrow \left[U(\mathfrak{g}[t])\right]^{\mathfrak{g}}.
$$
From \eqref{eq14}, we get
$$
\left[T(\mathfrak{g}[t])\right]^{\mathfrak{g}}\simeq \bigoplus_{k\geq 0} \bigoplus\limits_{n_{1},\dots,n_{k}\in \mathbb{Z}_{+}} \left[\mathfrak{g}^{\otimes k}\right]^{\mathfrak{g}} [n_{1},\dots,n_{k}].
$$
From this we see that $\{\hat{\theta} (n_{1},\dots,n_{k})\}$ spans (resp. generates) the algebra $\left[T(\mathfrak{g}[t])\right]^{\mathfrak{g}}$,  where $\theta$ runs over a homogeneous basis (resp. homogeneous algebra generators) of $\left[T(\g)\right]^\g$ and, for $k=$ deg $\theta$, $n_1, \dots ,  n_k$ runs over non-negative integers. (Here $\hat{\theta} (n_{1},\dots,n_{k})$ denotes the element in $\left[T(\mathfrak{g}[t])\right]^{\mathfrak{g}}$ as in the proof of Lemma \ref{lem6}.) From the surjectivity of the algebra homomorphism $\pi$, we get that
$\{{\theta} (n_{1},\dots,n_{k})\}$ spans in the first case (resp. generates in the second case), where $n_1, \dots n_k$ runs over non-negative integers. 
Now, we can restrict to   $0\leq n_1 \leq  \dots \leq n_k$ in the first case, which is easily seen from the commutation relation in $U(\g[t])$.
\end{proof}

Let $\vec{\lambda}= (\lambda_1, \dots, \lambda_d)$ be any tuple of dominant integral weights and let  $\vec{p}= (p_1, \dots, p_d)$ be any tuple of {\it distinct} points in $\C$. Then, the evaluation module $\vec{V}_{\vec{p}}(\vec{\lambda})$ is an irreducible $\g[t]$-module (cf. Definition \ref{defi6}).
Decompose $\vec{V}_{\vec{p}}(\vec{\lambda})$  into its isotypic components (as a $\g$-module):
$$
\vec{V}_{\vec{p}}(\vec{\lambda}) =\oplus \vec{V}_{\vec{p}}(\vec{\lambda}) [\mu],
$$
where $\vec{V}_{\vec{p}}(\vec{\lambda}) [\mu]$ denotes the  isotypic component corresponding to the highest weight $\mu$. 

Clearly, the action of $\g$ commutes with the action of $\left[U(\g[t])\right]^\g$  on $\vec{V}_{\vec{p}}(\vec{\lambda})$. Thus, we get an action of 
$\g\times \left[U(\g[t])\right]^\g$ on $\vec{V}_{\vec{p}}(\vec{\lambda})$  stabilizing each isotypic component $\vec{V}_{\vec{p}}(\vec{\lambda}) [\mu].$ 

The following theorem is one of our principal results of the paper. In view of Remark \ref{remark11}, the following theorem in the case of $\g=gl(n)$ was stated as an open problem in [R, Remark 4.9].
\begin{theorem}\label{thm9}
Let $\g$ be  a reductive Lie algebra. With the notation as above, each isotypic component $\vec{V}_{\vec{p}}(\vec{\lambda}) [\mu]$ is an irreducible module for $\mathfrak{g}\times \left[U(\g[t])\right]^\g$.
\end{theorem}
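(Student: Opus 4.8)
The plan is to deduce Theorem \ref{thm9} from the classical double commutant theorem, once one establishes that the image of $\left[U(\g[t])\right]^\g$ in $\End_\C(M)$ is \emph{all} of $\End_\g(M)$, where $M:=\vec{V}_{\vec{p}}(\vec{\lambda})$. In other words, the heart of the matter is surjectivity of the natural map $\left[U(\g[t])\right]^\g\to\End_\g(M)$ (which, for $\g=gl(n)$ and $M=V^{\otimes k}$, is the map $\varphi^{o}$, and then $\Xi$, of the introduction); everything else is formal.

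First I would use that, since the $p_i$ are distinct, $M$ is an irreducible $\g[t]$-module (Definition \ref{defi6}) and is finite dimensional over the algebraically closed field $\C$; hence by the Jacobson density (Burnside) theorem the structure homomorphism $\rho\colon U(\g[t])\to\End_\C(M)$ is surjective. Next I would observe that $\rho$ is a morphism of $\g$-modules, where $\g$ acts on $U(\g[t])$ by the adjoint action and on $\End_\C(M)\simeq M\otimes M^*$ by conjugation through $\rho|_\g$, and that both of these $\g$-modules are completely reducible: the target is finite dimensional and the center $\mathfrak z$ of $\g$ acts on it trivially (it acts by scalars on $M$), so it is a semisimple module over $\g/\mathfrak z$; and $U(\g[t])$ is a locally finite $\g$-module — its standard filtration consists of finite dimensional $\ad\g$-stable subspaces — on which $\mathfrak z\otimes 1$ acts trivially (it is central in $\g[t]$), hence it is a directed union of finite dimensional completely reducible $\g$-modules and therefore completely reducible. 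Since for any surjection of completely reducible $\g$-modules the induced map on the trivial isotypic components (i.e. on $\g$-invariants) is again surjective — the restriction to the non-trivial part lands in the non-trivial part of the target, so the invariants must absorb all of the target's invariants — it follows that
$$\bar\rho\colon \left[U(\g[t])\right]^\g\twoheadrightarrow \left[\End_\C(M)\right]^\g=\End_\g(M)$$
is surjective. (For $\g=gl(n)$, $M=V^{\otimes k}$, composing $\bar\rho$ with the Schur--Weyl isomorphism $\End_\g(V^{\otimes k})\simeq\C[\Sigma_k]$ yields $\Xi$, and an explicit preimage of a transposition can be read off from Lemma \ref{casimir}.)

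To finish, I would invoke the double commutant theorem for the pair $U(\g)\subseteq\End_\C(M)$: as $M$ is a semisimple $\g$-module (a finite dimensional $\g$-module on which $\mathfrak z$ acts by a scalar), each isotypic component decomposes as $\vec{V}_{\vec{p}}(\vec{\lambda})[\mu]=V(\mu)\otimes\operatorname{Hom}_\g(V(\mu),M)$ and is an irreducible module over $\g\times\End_\g(M)$, with $\g$ acting through its (full, by double commutant) image in $\End_\C(V(\mu))$ on the first tensor factor and $\End_\g(M)$ on the second. Since $\bar\rho$ is surjective, a subspace of $\vec{V}_{\vec{p}}(\vec{\lambda})[\mu]$ is stable under $\left[U(\g[t])\right]^\g$ if and only if it is stable under $\End_\g(M)$; hence $\vec{V}_{\vec{p}}(\vec{\lambda})[\mu]$ is already irreducible for $\g\times\left[U(\g[t])\right]^\g$, which is the assertion of the theorem.

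I expect the only genuinely non-formal input to be the surjectivity of $\rho$, which rests on the (cited) irreducibility of the evaluation module $M$ as a $\g[t]$-module. The step I would be most careful about is the passage to $\g$-invariants: exactness of the invariants functor on the \emph{infinite dimensional} module $U(\g[t])$ is what makes $\bar\rho$ surjective, and it is precisely the local finiteness of the adjoint $\g$-action on $U(\g[t])$ that licenses it.
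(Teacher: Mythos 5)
Your proposal is correct and follows essentially the same route as the paper: invoke Burnside/Jacobson density for the irreducible $\g[t]$-module $M=\vec{V}_{\vec{p}}(\vec{\lambda})$ to get surjectivity of $U(\g[t])\to\End_\C(M)$, pass to $\g$-invariants to get surjectivity onto $\End_\g(M)$, and then conclude irreducibility of each isotypic component. The only cosmetic difference is at the end: the paper identifies $\End_\g(W[\mu])\simeq\End_\C(W[\mu]^+)$ via highest-weight vectors and deduces the theorem from there, whereas you phrase the last step via the double-commutant decomposition $W[\mu]\simeq V(\mu)\otimes\operatorname{Hom}_\g(V(\mu),M)$; these are equivalent. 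Your extra paragraph spelling out why taking $\g$-invariants preserves surjectivity (local finiteness of the adjoint action on $U(\g[t])$, triviality of the $\mathfrak z$-action, hence complete reducibility) fills in a step the paper treats as immediate, and is a worthwhile clarification.
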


\begin{proof} Choose a Borel subalgebra $\mathfrak{b} $ of $\g$ and let $W[\mu]^+$ be the 
 $\mathfrak{b}$-eigen subspace  of $W [\mu] := \vec{V}_{\vec{p}}(\vec{\lambda}) [\mu].$  Then, $\left[U(\mathfrak{g}[t])\right]^{\mathfrak{g}}$ acts on $W[\mu]^{+}$. We have a ring homomorphism coming from the representation $W :=\vec{V}_{\vec{p}}(\vec{\lambda})$:
$$
\varphi : U(\mathfrak{g}[t])\to \End_{\mathbb{C}}(W).
$$

Since $W$ is an irreducible $\mathfrak{g}[t]$-module, by Burnside's theorem (cf. [L, Corollary 1, Chapter XVII, \S3])
$\varphi$ is surjective. Taking the $\mathfrak{g}$-invariants, we get a surjective homomorphism
\begin{equation}\label{eqnew}
\varphi^{o}: \left[U(\mathfrak{g}[t])\right]^{\mathfrak{g}}\twoheadrightarrow \End_{\mathfrak{g}}(W)\simeq \prod\limits_{\mu}\End_{\mathfrak{g}}(W[\mu]),
\end{equation}
where $\End_{\mathfrak{g}}(W)$ is the space of $\mathfrak{g}$-module endomorphisms of $W$. On projection to $\End_{\mathfrak{g}}(W[\mu])$,  we get a surjective morphism
$$
\varphi^{o}_{\mu} : \left[U(\mathfrak{g}[t])\right]^{\mathfrak{g}}\twoheadrightarrow \End_{\mathfrak{g}}(W[\mu])\simeq \End_{\mathbb{C}}(W[\mu]^{+}).
$$
In particular, $W[\mu]^{+}$ is an irreducible module under the action of $\left[U(\mathfrak{g}[t])\right]^{\mathfrak{g}}$. From this the theorem follows.
\end{proof}

\begin{remark} Since $ \left[U(\mathfrak{g})\right]^{\mathfrak{g}} \subset  \left[U(\mathfrak{g}[t])\right]^{\mathfrak{g}}$, the irreducible 
$\left[U(\mathfrak{g}[t])\right]^{\mathfrak{g}}$-modules $W[\mu]^+$ are mutually inequivalent as $\mu$ ranges over the highest weights of the components 
of   $\vec{V}_{\vec{p}}(\vec{\lambda})$ by Harish-Chandra's theorem (cf. [H, Theorem 23.3]). 
\end{remark}

\section{Determination of $ \left[U(\mathfrak{g})\right]^{\mathfrak{g}}$ for $\g=gl(n)$}\label{sec3}

By Proposition \ref{prop8}, it suffices to determine $\left[T(\g)\right]^\g$.

Let $V =\C^n$ with the standard basis $\{e_1, \dots, e_n\}$ and the standard representation of $gl(n)$. Let $\{e_1^*, \dots, e_n^*\}$ be the dual basis
of $V^*$. Recall the isomorphism:
$$\beta: V^*\otimes V \simeq \End V = gl(n),$$
where $\left(\beta (f\otimes v)\right)(w) = f(w)v$, for 
$v,w \in V, f\in V^*.$ Moreover, the tensor product action of $gl(n)$ on $V^*\otimes V$ corresponds  (under the identification $\beta$) to the adjoint action on $gl(n)$. Thus,
$$\left[\g^{\otimes k}\right]^\g \simeq \left[(V^*\otimes V)^{\otimes k}\right]^\g.$$
By the tensor version of the {\it First Fundamental Theorem} (FFT) (cf. [GW, Theorem 5.3.1]), we get that $\{\theta_\sigma\}_{\sigma \in \Sigma_k}$ spans
$\left[\g^{\otimes k}\right]^\g $,
where 
\begin{align}\theta_\sigma &:= \sum_{1 \leq i_1, \dots, i_k\leq n}\, (e^*_{i_{\sigma (1)}}\otimes e_{i_1})\otimes \dots \otimes (e^*_{i_{\sigma (k)}}\otimes e_{i_k})\notag\\
&=\sum_{1 \leq i_1, \dots, i_k\leq n}\,E_{i_1, i_{\sigma (1)}}\otimes \dots \otimes E_{i_k, i_{\sigma (k)}}, \,\,\,\text{under the isomorphism}\,\, \beta,
\end{align}
where $E_{i,j}$ is the $n\times n$-matrix with $(i,j)-$th entry $1$ and all other entries zero.

Thus, as a corollary of Proposition \ref{prop8}, we get:
\begin{corollary} Let $\g=gl(n)$. Then, $\left[U(\g[t])\right]^\g$ is spanned by
$$\bigcup_{k\geq 0}\,\bigcup_{0\leq n_1\leq \dots \leq n_k}\, \{\sum_{1 \leq i_j\leq n}\, E_{i_1, i_{\sigma (1)}}(n_1)\dots  E_{i_k, i_{\sigma (k)}}(n_k)\}_{\sigma\in \Sigma_k}.$$
\end{corollary}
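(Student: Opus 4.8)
The plan is to feed the First Fundamental Theorem description of $[\g^{\otimes k}]^\g$ recorded above directly into Proposition \ref{prop8}. Recall that Proposition \ref{prop8} asserts that $[U(\g[t])]^\g$ is spanned over $\C$ by the elements $\theta(t^{n_1},\dots,t^{n_k})$, where $\theta$ ranges over a homogeneous basis of $[T(\g)]^\g$ and, for $k=\deg\theta$, the exponents satisfy $0\le n_1\le\dots\le n_k$. Since $[T(\g)]^\g=\bigoplus_{k\ge 0}[\g^{\otimes k}]^\g$ as graded vector spaces, a homogeneous basis of $[T(\g)]^\g$ is nothing but the union, over all $k\ge 0$, of a basis of each $[\g^{\otimes k}]^\g$; so it suffices to exhibit such bases and to compute the associated elements $\theta(t^{n_1},\dots,t^{n_k})\in U(\g[t])$.

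First I would observe that, for fixed $k$ and fixed exponents $n_1\le\dots\le n_k$, the assignment $\theta\mapsto\theta(t^{n_1},\dots,t^{n_k})$ is $\C$-linear in $\theta\in[\g^{\otimes k}]^\g$; this is immediate from \eqref{eq12} in Definition \ref{defi5}. Consequently, the span of $\{\theta(t^{n_1},\dots,t^{n_k})\}$ as $\theta$ runs over a basis of $[\g^{\otimes k}]^\g$ coincides with its span as $\theta$ runs over \emph{any} spanning set of $[\g^{\otimes k}]^\g$. For $\g=gl(n)$ the FFT, in the form quoted above from [GW, Theorem 5.3.1], provides precisely such a spanning set $\{\theta_\sigma\}_{\sigma\in\Sigma_k}$, where under the identification $\beta$ one has $\theta_\sigma=\sum_{1\le i_1,\dots,i_k\le n}E_{i_1,i_{\sigma(1)}}\otimes\dots\otimes E_{i_k,i_{\sigma(k)}}$. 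Substituting $P_j=t^{n_j}$ into the definition \eqref{eq12} then yields
$$\theta_\sigma(t^{n_1},\dots,t^{n_k})=\sum_{1\le i_1,\dots,i_k\le n}E_{i_1,i_{\sigma(1)}}(n_1)\dots E_{i_k,i_{\sigma(k)}}(n_k).$$

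Finally I would assemble the pieces: combining the two preceding observations, $[U(\g[t])]^\g$ is spanned over $\C$ by the union, taken over $k\ge 0$ and over $0\le n_1\le\dots\le n_k$, of the families $\{\theta_\sigma(t^{n_1},\dots,t^{n_k})\}_{\sigma\in\Sigma_k}$, which is exactly the asserted set (the $k=0$ term contributing the scalars $\C\subset U(\g[t])$). There is no genuine obstacle here beyond one bookkeeping point: Proposition \ref{prop8} is phrased in terms of a homogeneous \emph{basis} of $[T(\g)]^\g$, whereas the FFT only supplies a spanning set of each $[\g^{\otimes k}]^\g$. As noted, this is harmless by the linearity of $\theta\mapsto\theta(t^{n_1},\dots,t^{n_k})$ — one may simply discard the superfluous $\theta_\sigma$'s to extract an honest basis — so the enlarged indexing by all of $\Sigma_k$ does not change the span. (One could ask for the sharper assertion that the stated set is itself a basis rather than merely a spanning set; that would require controlling the kernel of $\C[\Sigma_k]\to[\g^{\otimes k}]^\g$, which vanishes exactly when $n\ge k$, but the corollary as stated claims only spanning, so this refinement is not needed.)
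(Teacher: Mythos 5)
Your argument is correct and is essentially the same one the paper has in mind: it simply substitutes the FFT spanning set $\{\theta_\sigma\}_{\sigma\in\Sigma_k}$ for $[\g^{\otimes k}]^\g$ into Proposition \ref{prop8} and unwinds Definition \ref{defi5}. The only thing you add beyond the paper's terse statement is the (correct and worth noting) bookkeeping point that Proposition \ref{prop8} is phrased for a homogeneous \emph{basis} while the FFT gives only a spanning set, which is harmless by linearity of $\theta\mapsto\theta(t^{n_1},\dots,t^{n_k})$.
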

\begin{remark} \label{remark11} From the explicit description of the standard generators of the center of $U(gl(n))$, the cycle $\sigma_k:=(1,2, \dots, k)$ plays a special role (also see [R, \S4.2]). In this case
$$\theta_{\sigma_k} = \sum_{1 \leq i_j\leq n}\, E_{i_1, i_2}\otimes E_{i_2, i_3}\otimes\dots  \otimes E_{i_k, i_1}.$$
Considering the cycle decomposition of permutations in $\Sigma_k$, it is easy to see (by the above corollary) that the elements
$$\{\theta_{\sigma_k}(n_1, \dots, n_k): 1 \leq k \,\,\text{and}\,\, n_1, \dots , n_k \geq 0\}$$
generate the algebra $\left[U(gl(n)[t])\right]^{gl(n)}$. This provides an affirmative answer to a question of Rao [R, Problem 4.5 (1)]. Observe that in the above set of generators, we take $n_1, \dots , n_k$ to vary over $\mathbb{Z}_{\geq 0}$. 

If we take the subset $\{\theta_{\sigma_k}(n_1, \dots, n_k): 1 \leq k \,\,\text{and}\,\, 0\leq n_1\leq  \dots \leq n_k \}$, then it does {\it not} generate the algebra $\left[U(gl(n)[t])\right]^{gl(n)}$.
\end{remark}

\section{Realizing Schur-Weyl duality intertwiners in terms of current algebras}

As in the last section, let $V =\C^n$ be the standard representation of $\g=gl(n)$. For any positive integer $k$, the symmetric group $\Sigma_k$ acts on the tensor product $V^{\otimes k}$ by permuting the factors. Clearly, this action of $\Sigma_k$ commutes with the tensor product action of $gl(n)$
on $V^{\otimes k}$. 
Thus, we have an algebra homomorphism:
$$\Phi: \C[\Sigma_k] \to \End_\g(V^{\otimes k}),$$
where $\End_\g(V^{\otimes k})$ denotes the space of $\g$-module endomorphisms of $V^{\otimes k}$.

The content of the Schur-Weyl duality is the following result (cf. [GW, \S4.2.4]).
\begin{theorem} The above map $\Phi$ is an (algebra) isomorphism.
\end{theorem}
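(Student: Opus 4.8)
The plan is to prove that $\Phi$ is injective and that it is surjective onto $\End_\g(V^{\otimes k})$, the surjectivity being the substantial half. The first step is the standard identification $\End(V^{\otimes k})\cong (\End V)^{\otimes k}$ and, as a $\GL(n)$-module (equivalently a $\g$-module for the differentiated action), $\End(V^{\otimes k})\cong V^{\otimes k}\otimes (V^*)^{\otimes k}$ via $w\otimes f\mapsto\bigl(u\mapsto f(u)\,w\bigr)$. Since $\GL(n)$ is connected, an operator is $\g$-equivariant iff it is $\GL(n)$-equivariant, so under this identification $\End_\g(V^{\otimes k})=\bigl[V^{\otimes k}\otimes (V^*)^{\otimes k}\bigr]^\g$, and the problem reduces to computing this invariant space and matching it with the image of $\Phi$.

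For injectivity --- which holds, and is what is used, in the range $n\ge k$ --- I would evaluate on the decomposable tensor $v:=e_1\otimes\cdots\otimes e_k$. For $\sigma\in\Sigma_k$ one has $\Phi(\sigma)v=e_{\sigma^{-1}(1)}\otimes\cdots\otimes e_{\sigma^{-1}(k)}$, and as $\sigma$ varies these are pairwise distinct basis vectors of $V^{\otimes k}$, hence linearly independent; so any relation $\sum_\sigma c_\sigma\Phi(\sigma)=0$, applied to $v$, forces all $c_\sigma=0$.

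For surjectivity I would invoke the First Fundamental Theorem for $\GL(n)$ --- the very tool already used in Section~\ref{sec3}. Applied to $V^{\otimes k}\otimes (V^*)^{\otimes k}$, it says that $\bigl[V^{\otimes k}\otimes (V^*)^{\otimes k}\bigr]^\g$ is spanned by the complete contractions, one for each $\sigma\in\Sigma_k$:
$$C_\sigma:=\sum_{1\le i_1,\dots,i_k\le n} e_{i_1}\otimes\cdots\otimes e_{i_k}\otimes e^*_{i_{\sigma(1)}}\otimes\cdots\otimes e^*_{i_{\sigma(k)}}.$$
A short computation shows that, viewed as an operator on $V^{\otimes k}$, $C_\sigma$ sends $e_{a_1}\otimes\cdots\otimes e_{a_k}$ to $e_{a_{\sigma^{-1}(1)}}\otimes\cdots\otimes e_{a_{\sigma^{-1}(k)}}$, that is $C_\sigma=\Phi(\sigma)$. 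Hence $\End_\g(V^{\otimes k})=\operatorname{span}\{\Phi(\sigma):\sigma\in\Sigma_k\}=\operatorname{im}\Phi$; together with injectivity (for $n\ge k$) and the already-noted fact that $\Phi$ is an algebra homomorphism, this shows $\Phi$ is an algebra isomorphism.

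The hard part is surjectivity, but granting the First Fundamental Theorem it is short; the real care goes into matching the combinatorics of the contractions with the permutation operators (which factor is contracted against which, and $\sigma$ versus $\sigma^{-1}$, depending on the chosen convention for the $\Sigma_k$-action). If one preferred not to cite FFT, I would instead run the double centralizer theorem: $\operatorname{im}\Phi$ is a quotient of $\C[\Sigma_k]$, hence semisimple, so $\operatorname{im}\Phi=\End_{\mathcal{B}'}(V^{\otimes k})$ where $\mathcal{B}':=\End_{\operatorname{im}\Phi}(V^{\otimes k})$; one then checks, by polarization and the Zariski density of $\GL(n)$ in $\End V$, that $\mathcal{B}'$ is the image of $U(\g)$, namely $S^k(\End V)\subset (\End V)^{\otimes k}$, and concludes $\End_\g(V^{\otimes k})=\End_{\mathcal{B}'}(V^{\otimes k})=\operatorname{im}\Phi$. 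In that route the obstacle becomes the elementary lemma that an operator commuting with all factor-permutations is a symmetric tensor in $(\End V)^{\otimes k}$.
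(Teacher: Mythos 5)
The paper does not actually prove this statement---it is cited as the Schur--Weyl duality theorem from [GW, \S 4.2.4], where the argument goes through the double centralizer theorem. Your proof is a correct, self-contained alternative via the tensor form of the First Fundamental Theorem for $\GL(n)$, which fits the paper particularly well since the same FFT is the engine of Sections 3, 5 and 6 for computing $\left[\g^{\otimes k}\right]^\g$. Your identification $\End(V^{\otimes k})\cong V^{\otimes k}\otimes(V^*)^{\otimes k}$, the observation that $\g$-invariance equals $\GL(n)$-invariance by connectedness, and the computation $C_\sigma=\Phi(\sigma)$ (with the $\sigma$ versus $\sigma^{-1}$ bookkeeping done correctly) all check out; FFT then gives surjectivity of $\Phi$ unconditionally. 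The point worth underlining, which you rightly flag, is that the statement as written in the paper is only true for $n\ge k$: $\Phi$ is always surjective, but its kernel is nonzero when $n<k$ (for instance the full antisymmetrizer in $\C[\Sigma_k]$ is killed), so ``isomorphism'' implicitly assumes $n\ge k$---an assumption also silently present when the paper later uses the resulting surjection $\Xi:\left[U(\g[t])\right]^\g\twoheadrightarrow\C[\Sigma_k]$. Your injectivity argument (evaluation on $e_1\otimes\cdots\otimes e_k$) is the standard one and is correct under that hypothesis. Your second sketch, via the double centralizer theorem and the identification of the commutant of $\Sigma_k$ with $S^k(\End V)$ together with the fact that $U(\g)$ maps onto it (by polarization and Zariski density of $\GL(n)$), is essentially the route taken in [GW]; both routes are sound, and the FFT route has the advantage of being the one already set up in this paper.
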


Choose  $\vec{p}=(p_1, \dots , p_k) \in \C^k$ such that $p_i$'s are distinct. By the proof of Theorem \ref{thm9} (see \eqref{eqnew}), there is a surjective algebra homomorphism (for $\g=gl(n)$)
$$\varphi^{o}: \left[U(\mathfrak{g}[t])\right]^{\mathfrak{g}}\twoheadrightarrow \End_{\mathfrak{g}}(V^{\otimes k}) \simeq  \C[\Sigma_k] ,$$
where the last identification is via $\Phi$. Thus, we get a surjective algebra homomorphism
$$\Xi: \left[U(\mathfrak{g}[t])\right]^{\mathfrak{g}}\twoheadrightarrow  \C[\Sigma_k] .$$
The following result is easy to prove using the definition of the map $\Xi$.
\begin{proposition} \label{schur} For any reflection $\tau = (r, s) \in \Sigma_k$, define the polynomials:
$$P_\tau = (t-p_r+1)\cdot\prod_{d\neq r}\,\frac{(t-p_d)}{(p_r-p_d)},\,\,\, Q_\tau = (t-p_s+1)\cdot\prod_{d\neq s}\,\frac{(t-p_d)}{(p_s-p_d)}.$$
Then,
$$ \Xi\left(\sum_{1\leq i, j\leq n}\,E_{i,j}(P_\tau)\cdot E_{j,i}(Q_\tau)\right)= \tau.$$
\end{proposition}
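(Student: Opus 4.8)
The plan is to unwind the definition of $\Xi$ and reduce everything to computing how the specific operator $\sum_{i,j} E_{i,j}(P_\tau)\cdot E_{j,i}(Q_\tau)$ acts on $V^{\otimes k}$ as a $\g$-module endomorphism, then check that this endomorphism is precisely $\Phi(\tau)$, i.e.\ the transposition of the $r$-th and $s$-th tensor factors. Recall that $\Xi = \Phi^{-1}\circ \varphi^o$, where $\varphi^o$ is just the restriction to $\left[U(\g[t])\right]^\g$ of the representation map $\varphi\colon U(\g[t])\to \End_\C(V^{\otimes k})$ attached to the evaluation module $\vec V_{\vec p}(\vec\lambda)$ with each $\lambda_i$ equal to the highest weight of the standard representation. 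So it suffices to show that the element $Z_\tau := \sum_{1\le i,j\le n} E_{i,j}(P_\tau)\cdot E_{j,i}(Q_\tau) \in U(\g[t])$ acts on $V^{\otimes k}$ as the operator permuting factors $r$ and $s$.

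The key computational point is the choice of the polynomials: $P_\tau$ and $Q_\tau$ are Lagrange-type interpolation polynomials, rigged so that $P_\tau(p_d) = \delta_{d,r}$ for $d\ne r$ and $P_\tau(p_r) = 1$, hence $P_\tau(p_d) = \delta_{d,r}$ for \emph{all} $d$; similarly $Q_\tau(p_d) = \delta_{d,s}$ for all $d$. (The factor $(t-p_r+1)$ evaluated at $t=p_r$ gives $1$, which is why it was inserted.) Therefore, in the evaluation-module action, $E_{i,j}(P_\tau)$ acts as $E_{i,j}$ on the $r$-th factor and as $0$ on every other factor, and likewise $E_{j,i}(Q_\tau)$ acts as $E_{j,i}$ on the $s$-th factor and $0$ elsewhere. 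Since $r\ne s$, these two actions are on different factors and hence commute, so the product $Z_\tau$ acts as
\[
\sum_{1\le i,j\le n}\; \bigl(E_{i,j}\bigr)_{(r)}\,\bigl(E_{j,i}\bigr)_{(s)} \;\in\; \End_\C\bigl(V^{\otimes k}\bigr),
\]
where the subscript indicates on which tensor factor the matrix acts (identity elsewhere). Finally one checks the elementary identity that $\sum_{i,j} E_{i,j}\otimes E_{j,i}$, acting on $V\otimes V$, is the flip $v\otimes w\mapsto w\otimes v$: indeed $\sum_{i,j} E_{i,j}e_a\otimes E_{j,i}e_b = \sum_{i,j}\delta_{j,a}\delta_{i,b}\,e_i\otimes e_j = e_b\otimes e_a$. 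Applied on factors $r$ and $s$, this is exactly $\Phi(\tau)$. Hence $\varphi^o(Z_\tau) = \Phi(\tau)$ and so $\Xi(Z_\tau) = \tau$.

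The main obstacle — really the only subtlety — is bookkeeping with the evaluation action and the non-commutative product in $U(\g[t])$: one must be careful that $Z_\tau$ is an honest product of two current-algebra elements, so its action on $V^{\otimes k}$ is the composite of the two operators $\varphi(E_{i,j}(P_\tau))$ and $\varphi(E_{j,i}(Q_\tau))$, and one must sum over $i,j$ only after composing. The point that rescues commutativity (and hence lets one sum freely) is that the support of $P_\tau$ on the points $\vec p$ is the single index $r$ and that of $Q_\tau$ is the single index $s$, with $r\ne s$, so on $V^{\otimes k}$ the two factor-local operators commute term by term; this is where the distinctness of the $p_i$ and the specific interpolation construction are used. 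Everything else is the two displayed elementary identities above.
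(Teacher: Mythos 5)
Your proposal is correct, and it is the intended argument (the paper omits the proof, remarking only that it is ``easy to prove using the definition of the map $\Xi$''). The essential observations are exactly the two you isolate: the Lagrange-type normalization forces $P_\tau(p_d)=\delta_{d,r}$ and $Q_\tau(p_d)=\delta_{d,s}$, so that $E_{i,j}(P_\tau)$ and $E_{j,i}(Q_\tau)$ act purely on the $r$-th and $s$-th tensor factors respectively; and the identity $\sum_{i,j}E_{i,j}\otimes E_{j,i}$ acts on $V\otimes V$ as the flip. One small remark: the commutativity point you flag as ``the only subtlety'' is not actually needed to compute $\varphi^o(Z_\tau)$ --- since $\varphi$ is an algebra homomorphism, $\varphi(E_{i,j}(P_\tau)\cdot E_{j,i}(Q_\tau))$ is the composite of the two local operators regardless of whether they commute, and the commutativity merely confirms the order is irrelevant. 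The genuine content is the interpolation property of $P_\tau,Q_\tau$, which you identify correctly.
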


\section{Determination of $ \left[U(\mathfrak{g})\right]^{\g}$ for the symplectic Lie algebra $\g={\s}p (2n)$} 

Again we need to determine $\left[\g^{\otimes k}\right]^\g$ for $\g={\s}p (2n)$.

 Let $V=\C^{2n}$ be equipped with the
nondegenerate symplectic form $\langle \,,\,\rangle$ so that its matrix
$\bigl(\langle e_i,e_j\rangle\bigr)_{1\leq i,j \leq 2n}$ in the
standard basis $\{e_1,\dots, e_{2n}\}$ is given by
\begin{equation*}
\hat{J}=\left(\begin{array}{cc}
0&J\\
-J&0
\end{array}\right),
\end{equation*}
where $J$ is the anti-diagonal matrix $(1,\dots,1)$ of size $n$. Let
$$\Sp(2n):=\{g\in \SL(2n):
g \,\text{leaves  the form}\, \langle \,,\,\rangle \,\text{invariant}\}$$ be the associated
symplectic group and ${\s}p (2n)$ its Lie algebra.  Thus, $\Sp(2n)$ has defining representation in $V$. The form  $\langle \,,\,\rangle $ allows us  to identify 
$V \simeq V^*$ ($v \mapsto f_v(w)= \langle v, w\rangle$ for $v,w \in V$). Then, there is an identification 
$$S^2(V) \simeq {\s}p (2n) ,$$
such that the standard action of  ${\s}p (2n)$ on the left  corresponds to the adjoint action on the right. Hence, for 
$\g={\s}p (2n)$,
$$\left[\g^{\otimes k}\right]^\g \simeq \left[\otimes^k(S^2(V))\right]^{{\s}p (2n)}.$$
By the tensor version of FFT for $\Sp(2n)$ (cf. [GW, Theorem 5.3.3]), we get that 
$\left[\otimes^k(V^{\otimes 2})\right]^{{\s}p (2n)}$ is spanned by 
$$\{\Theta_\sigma :=\sum_{1 \leq i_1, i_3, \dots, i_{2k-1} \leq 2n}\,(e_{i_{\sigma(1)}}\otimes e_{i_{\sigma(2)}})\otimes \dots \otimes (e_{i_{\sigma(2k-1)}}\otimes e_{i_{\sigma(2k)}})\}_{\sigma\in \Sigma_{2k}},$$
where we require $e_{i_{2j}} =s(i_{2j-1}) e_{2n+1 -i_{2j-1}}$ for $1\leq j\leq k$ and $s:\{1,2, \dots, 2n\} \to \{\pm 1\}$ is the function:
\begin{align} \label{0}
s (i)&=1, \,\,\,\text{if}\,\,\, 1\leq i\leq n\notag\\
 &=-1, \,\,\,\text{if}\,\,\, i>n.\notag
 \end{align}
The standard symmetrization $V^{\otimes 2} \to S^2(V)$ gives rise to a surjective map 
$$\gamma: \left[\otimes^k (V^{\otimes 2})\right]^{{\s}p (2n)} \twoheadrightarrow  \left[\otimes^k(S^2(V))\right]^{{\s}p (2n)} \simeq \left[\g^{\otimes k}\right]^\g.$$
Under this identification, we get the element 
\begin{align*}\gamma (\Theta_\sigma)=\frac{1}{2}\sum_{1 \leq i_1, i_3, \dots, i_{2k-1} \leq 2n}\, &\left(s(i_{\sigma(1)})E_{i_{\sigma(2)}, 2n+1-i_{\sigma(1)}}+s(i_{\sigma(2)})E_{i_{\sigma(1)}, 2n+1-i_{\sigma(2)}}\right) \otimes \dots\otimes \\
&\left(s(i_{\sigma(2k-1)})E_{i_{\sigma(2k)}, 2n+1-i_{\sigma(2k-1)}}+s(i_{\sigma(2k)})E_{i_{\sigma(2k-1)}, 2n+1-i_{\sigma(2k)}}\right),
\end{align*}
where the square matrix $E_{i,j}$ is as defined in Section \ref{sec3}.

Thus, as a corollary of Proposition \ref{prop8}, we get:
\begin{corollary} Let $\g={\s}p(2n)$. Then, $\left[U(\g[t])\right]^\g$ is spanned by
\begin{align*} 
\cup_{k\geq 0}\,\cup_{0\leq n_1\leq \dots \leq n_k}\, &\{\sum_{1 \leq i_1, i_3, \dots, i_{2k-1} \leq 2n}\, \left(s(i_{\sigma(1)})E_{i_{\sigma(2)}, 2n+1-i_{\sigma(1)}}+s(i_{\sigma(2)})E_{i_{\sigma(1)}, 2n+1-i_{\sigma(2)}}\right) (n_1)\otimes \dots \\
&\otimes \left(s(i_{\sigma(2k-1)})E_{i_{\sigma(2k)}, 2n+1-i_{\sigma(2k-1)}}+s(i_{\sigma(2k)})E_{i_{\sigma(2k-1)}, 2n+1-i_{\sigma(2k)}}\right)(n_k)\}_{\sigma\in \Sigma_{2k}},
\end{align*}
where we require $e_{i_{2j}} =s(i_{2j-1}) e_{2n+1 -i_{2j-1}}$ for $1\leq j\leq k$, which gives the corresponding constraint on $E_{p,q}=e_q^*\otimes e_p$.
\end{corollary}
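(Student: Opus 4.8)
The plan is to deduce the Corollary from Proposition~\ref{prop8} in exactly the same way the corresponding statement was obtained for $gl(n)$ in Section~\ref{sec3}, now feeding in the spanning set of $[\g^{\otimes k}]^\g$ produced above from the First Fundamental Theorem for $\Sp(2n)$.

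Recall what has already been established in the discussion preceding the Corollary: for $\g=\s p(2n)$ there are $\g$-module identifications $[\g^{\otimes k}]^\g\simeq[\otimes^k(S^2(V))]^{\s p(2n)}$; the tensors $\Theta_\sigma$, $\sigma\in\Sigma_{2k}$, span $[\otimes^k(V^{\otimes 2})]^{\s p(2n)}$; and, under the surjection $\gamma$ induced by the symmetrization $V^{\otimes 2}\to S^2(V)$ applied in each of the $k$ pairs of $V$-factors, the elements $\gamma(\Theta_\sigma)$, $\sigma\in\Sigma_{2k}$, span $[\g^{\otimes k}]^\g$. Moreover, the explicit formula displayed for $\gamma(\Theta_\sigma)$ already exhibits it as a $\C$-linear combination of decomposable tensors $y_1\otimes\cdots\otimes y_k$ with each $y_j\in\g$ (the $j$-th factor being the symmetrized pair), the summation being over $1\le i_1,i_3,\dots,i_{2k-1}\le 2n$ subject to $e_{i_{2j}}=s(i_{2j-1})e_{2n+1-i_{2j-1}}$.

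Now I apply Proposition~\ref{prop8}. Since $[T(\g)]^\g=\bigoplus_{k\ge 0}[\g^{\otimes k}]^\g$ and each summand is finite dimensional, a spanning set of each summand contains a basis; so for every $k$ choose a subset $B_k\subseteq\{\gamma(\Theta_\sigma)\}_{\sigma\in\Sigma_{2k}}$ that is a basis of $[\g^{\otimes k}]^\g$, and let $B=\bigcup_k B_k$, a homogeneous basis of $[T(\g)]^\g$. By Proposition~\ref{prop8}, $[U(\g[t])]^\g$ is spanned by $\{\theta(t^{n_1},\dots,t^{n_k}):\theta\in B_k,\ k\ge 0,\ 0\le n_1\le\cdots\le n_k\}$. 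Adjoining the remaining elements $\gamma(\Theta_\sigma)(t^{n_1},\dots,t^{n_k})$ with $\sigma\notin B_k$ only enlarges an already spanning set, so $[U(\g[t])]^\g$ is spanned by $\{\gamma(\Theta_\sigma)(t^{n_1},\dots,t^{n_k}):k\ge 0,\ \sigma\in\Sigma_{2k},\ 0\le n_1\le\cdots\le n_k\}$. Because $\theta\mapsto\theta(t^{n_1},\dots,t^{n_k})$ is defined (and $\C$-linear) slot by slot, substituting the explicit expression for $\gamma(\Theta_\sigma)$ and placing $t^{n_j}$ in the $j$-th slot yields precisely the displayed element of the Corollary, with the same index constraint; this proves the claim.

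The argument is purely formal, so I do not expect a genuine obstacle. The only points deserving a word of care are that passing from the spanning set $\{\gamma(\Theta_\sigma)\}$ to a basis and back changes nothing (immediate from finite dimensionality and the $\C$-linearity of $\theta\mapsto\theta(t^{n_1},\dots,t^{n_k})$), and that the bookkeeping of the symmetrization $\gamma$ together with the slot substitution matches the displayed formula — which is transparent here since $\gamma(\Theta_\sigma)$ has already been written out explicitly as a sum of decomposable tensors.
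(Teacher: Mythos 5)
Your proposal is correct and follows exactly the route the paper intends: the paper derives this corollary directly from Proposition \ref{prop8} combined with the preceding FFT-based spanning set $\{\gamma(\Theta_\sigma)\}$ of $[\g^{\otimes k}]^\g$, giving no further argument. Your extra care in passing from the spanning set to a basis (to match the literal hypothesis of Proposition \ref{prop8}) and back is a valid, if routine, filling-in of a step the paper leaves implicit.
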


\section{Determination of $ \left[U(\mathfrak{s}o(n))\right]^{\SO(n)}$} 

Let $V=\C^{n}$ be equipped with the
nondegenerate symmetric form $\langle \,,\,\rangle$ so that the standard basis  $\{e_1,\dots, e_{n}\}$ is orthonormal.
 Let
$$  \SO(n):=\{g\in \GL(n):
g \,\text{leaves  the form}\, \langle \,,\,\rangle \,\text{invariant}\}$$ be the associated
orthogonal group and ${\s}o (n)$ its Lie algebra.  Thus, $\SO(n)$ has defining representation in $V$. The form  $\langle \,,\,\rangle $ allows us  to identify 
$V \simeq V^*$ ($v \mapsto f_v(w)= \langle v, w\rangle$ for $v,w \in V$). Then, there is an identification 
$$\wedge^2(V) \simeq {\s}o (n),$$
such that the standard action of  $\SO (n)$ on the left  corresponds to the adjoint action on the right. Hence, for 
$\g={\s}o (n)$,
$$\left[{\s}o(n)^{\otimes k}\right]^{\SO(n)} \simeq \left[\otimes^k(\wedge^2(V))\right]^{\SO(n)}.$$
By the tensor version of FFT for $\SO(n)$ (cf. [GW, Theorem 5.3.3]), we get that 
$\left[\otimes^k(V^{\otimes 2})\right]^{\SO(n)}$ is spanned by 
$$\{\Psi_\sigma :=\sum_{1 \leq i_1, i_3, \dots, i_{2k-1} \leq n}\,(e_{i_{\sigma(1)}}\otimes e_{i_{\sigma(2)}})\otimes \dots \otimes (e_{i_{\sigma(2k-1)}}\otimes e_{i_{\sigma(2k)}})\}_{\sigma\in \Sigma_{2k}},$$
where we require $i_{2j}=i_{2j-1}$ for $1 \leq j \leq k$.

The standard anti-symmetrization $V^{\otimes 2} \to \wedge^2(V)$ gives rise to a surjective map 
$$\delta: \left[\otimes^k (V^{\otimes 2})\right]^{ \SO(n)} \twoheadrightarrow  \left[\otimes^k(\wedge^2(V))\right]^{\SO(n)} \simeq        
\left[{\s}o(n)^{\otimes k}\right]^{\SO(n)} .$$
Under this identification, we get the element 
\begin{align*}\delta (\Psi_\sigma)=- \frac{1}{2}\sum_{1 \leq i_1, i_3, \dots, i_{2k-1} \leq n}\, &\left(E_{i_{\sigma(1)}, i_{\sigma(2)}}-E_{i_{\sigma(2)}, i_{\sigma(1)}}\right) \otimes \dots\otimes \\
&\left(E_{i_{\sigma(2k -1)}, i_{\sigma(2k)}} - E_{i_{\sigma(2k)}, i_{\sigma(2k-1)}}\right),
\end{align*}
where the square matrix $E_{i,j}$ is as defined in Section \ref{sec3}.

Thus, as a corollary of Proposition \ref{prop8}, we get:
\begin{corollary} Let $\g={\s}o(n)$. Then, $\left[U(\g[t])\right]^{\SO(n)}$ is spanned by
\begin{align*} 
\bigcup_{k\geq 0}\,\bigcup_{0\leq n_1\leq \dots \leq n_k}\, \{\sum_{1 \leq i_1, i_3, \dots, i_{2k-1} \leq n}\, &
\left(E_{i_{\sigma(1)}, i_{\sigma(2)}}-E_{i_{\sigma(2)}, i_{\sigma(1)}}\right) (n_1)\otimes \dots\otimes \\
&\left(E_{i_{\sigma(2k -1)}, i_{\sigma(2k)}} - E_{i_{\sigma(2k)}, i_{\sigma(2k-1)}}\right)(n_k)\}_{\sigma\in \Sigma_{2k}},
\end{align*}
 where we require $i_{2j}=i_{2j-1}$ for $1 \leq j \leq k$.
\end{corollary}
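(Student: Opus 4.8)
The plan is to deduce this corollary from Proposition \ref{prop8} together with the description of $\left[{\s}o(n)^{\otimes k}\right]^{\SO(n)}$ obtained just above via the tensor form of the FFT for $\SO(n)$. By Proposition \ref{prop8}, $\left[U(\g[t])\right]^{\SO(n)}$ is spanned over $\C$ by the elements $\theta(n_1, \dots, n_k)$, where $\theta$ ranges over a homogeneous basis of $\left[T(\g)\right]^{\SO(n)} = \bigoplus_k \left[\g^{\otimes k}\right]^{\SO(n)}$ and, for fixed $k = \deg\theta$, the integers range over $0 \leq n_1 \leq \dots \leq n_k$. So the first step is to observe that it suffices to let $\theta$ run over any \emph{spanning set} of each $\left[\g^{\otimes k}\right]^{\SO(n)}$ rather than a basis, since passing from a basis to a spanning set only enlarges the collection of elements $\theta(n_1,\dots,n_k)$, hence preserves the spanning property. (One should note in passing that the reduction to $0 \leq n_1 \leq \dots \leq n_k$ from arbitrary non-negative $n_i$, which used the commutation relations in $U(\g[t])$, still goes through when $\theta$ is only $\SO(n)$-invariant rather than ${\s}o(n)$-invariant, because for $n$ even the relevant $\theta(n_1,\dots,n_k)$ still commutes with ${\s}o(n) = \g$ by Lemma \ref{lem6} — only the \emph{spanning} of the invariant subalgebra is affected by using $\SO(n)$ in place of ${\s}o(n)$, not the commutation.)

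The second step is to feed in the explicit spanning set. We have just shown that $\left[{\s}o(n)^{\otimes k}\right]^{\SO(n)} \simeq \left[\otimes^k(\wedge^2(V))\right]^{\SO(n)}$ under the identification $\wedge^2(V) \simeq {\s}o(n)$, and that $\left[\otimes^k(\wedge^2(V))\right]^{\SO(n)}$ is spanned by the elements $\delta(\Psi_\sigma)$ for $\sigma \in \Sigma_{2k}$, where $\delta(\Psi_\sigma)$ has the displayed explicit form in terms of the matrices $E_{p,q}$ subject to the constraint $i_{2j} = i_{2j-1}$. Taking $\theta = \delta(\Psi_\sigma)$ and applying the recipe $\theta(n_1, \dots, n_k) = \theta(t^{n_1}, \dots, t^{n_k})$ of Definition \ref{defi5} and Proposition \ref{prop8}, each tensor slot $E_{i_{\sigma(2\ell-1)}, i_{\sigma(2\ell)}} - E_{i_{\sigma(2\ell)}, i_{\sigma(2\ell-1)}}$ in position $\ell$ becomes $\left(E_{i_{\sigma(2\ell-1)}, i_{\sigma(2\ell)}} - E_{i_{\sigma(2\ell)}, i_{\sigma(2\ell-1)}}\right)(n_\ell)$ under the map $\pi: T(\g[t]) \twoheadrightarrow U(\g[t])$; the overall scalar $-\tfrac12$ is immaterial for spanning and may be dropped. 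This is exactly the set displayed in the statement, so the corollary follows.

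The only genuinely substantive point, and the place to be a little careful, is the interaction between ``$\SO(n)$-invariants vs. ${\s}o(n)$-invariants'' and Proposition \ref{prop8}, which was stated and proved for $\left[T(\g)\right]^\g$ and $\left[U(\g[t])\right]^\g$. For $n$ odd, $\SO(n)$-invariants and ${\s}o(n)$-invariants coincide (since $\SO(n)$ is connected), so there is nothing to check. For $n$ even, one should note that the proof of Proposition \ref{prop8} is entirely formal: it only uses the $\g$-module (equivalently, $G$-module) decomposition $T(\g[t]) = \bigoplus_{k} \bigoplus_{n_1,\dots,n_k} \g^{\otimes k}[n_1,\dots,n_k]$ and the surjectivity of $\pi$, neither of which is sensitive to replacing $\g = {\s}o(n)$ by $\SO(n)$ throughout. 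Thus the statement ``$\left[U(\g[t])\right]^{\SO(n)}$ is spanned by $\{\theta(n_1,\dots,n_k)\}$, $\theta$ ranging over a homogeneous spanning set of $\left[T(\g)\right]^{\SO(n)}$'' holds verbatim, and I would simply remark that the argument of Proposition \ref{prop8} applies word for word with $\SO(n)$ in place of $\g$. Apart from this bookkeeping, the proof is a direct substitution of the FFT output into Proposition \ref{prop8}, exactly parallel to the $gl(n)$ and ${\s}p(2n)$ corollaries, and presents no further obstacle.
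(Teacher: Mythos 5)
Your proposal is correct and follows the same route as the paper, which simply cites Proposition \ref{prop8} together with the FFT-derived spanning set $\{\delta(\Psi_\sigma)\}$ and gives no further argument. Your extra care about replacing the basis by a spanning set, and about $\SO(n)$-invariants versus ${\s}o(n)$-invariants for $n$ even (the reason the corollary is stated with $\SO(n)$), correctly fills in the bookkeeping the paper leaves implicit.
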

\begin{remark} For $n$ odd, observe that 
$$\left[{\s}o(n)^{\otimes k}\right]^{\SO(n)} \simeq \left[{\s}o(n)^{\otimes k}\right]^{{\s}o (n)}.$$
\end{remark}
\newpage

\vskip6ex
\noindent
S.K.: Department of Mathematics, University of North Carolina,
Chapel Hill, NC 27599-3250, USA (email: shrawan$@$email.unc.edu)

\end{document}